\documentclass[11pt,a4paper,reqno]{amsart}
\usepackage[margin=25mm,headheight=14pt,footskip=24pt]{geometry}
  \usepackage[utf8]{inputenc}
  \usepackage[T1]{fontenc}
  \usepackage[english]{babel}

\usepackage{amsmath,amssymb,amsthm}
\usepackage{textcase}
\usepackage{microtype}
\usepackage[unicode,hidelinks]{hyperref}
\allowdisplaybreaks[1]

\numberwithin{equation}{section}
\newtheorem{theorem}{Theorem}[section]
\newtheorem{lemma}[theorem]{Lemma}
\newtheorem{proposition}[theorem]{Proposition}
\newtheorem{conjecture}[theorem]{Conjecture}
\newtheorem{corollary}[theorem]{Corollary}
\theoremstyle{definition}
\newtheorem{definition}[theorem]{Definition}
\newtheorem{remark}[theorem]{Remark}

\DeclareMathOperator{\Sing}{Sing}
\DeclareMathOperator{\codim}{codim}
\newcommand{\CC}{\mathbb C}
\newcommand{\PP}{\mathbb P}
\newcommand{\inlinemath}[1]{\mbox{$#1$}}

\title{Existence of weak Landau--Ginzburg 
models for weighted complete intersections}

\author{Victor Przyjalkowski, Constantin Shramov}

\begin{document}

\begin{abstract}
We prove that every smooth well formed Fano or Calabi--Yau weighted complete intersection
admits a nef-partition. As an application, we establish the existence
of weak Landau--Ginzburg models for all such Fano weighted complete intersections.
\end{abstract}

\maketitle

\section{Introduction}

One of the aims of mirror symmetry for Fano varieties is to construct explicit
Landau--Ginzburg models relating the Gromov--Witten invariants of Fano varieties to the
periods of their mirrors. In Givental's construction~\cite{Giv} for complete intersections
in smooth toric varieties, such a model is determined by a nef-partition of the toric
boundary divisors. The notion of a nef-partition was introduced by Borisov~\cite{Bor}
in his construction of conjectural mirror families for Calabi--Yau complete intersections
in toric varieties.

Weighted projective spaces are also toric varieties, but they are usually
singular.
In~\cite{Prz07} (see also~\cite{Wang}), Givental's formula for the Gromov--Witten series was extended to smooth
complete intersections that lie in singular toric varieties and are disjoint from
the singular locus of the ambient variety. These include the smooth well formed weighted complete
intersections considered in this paper.

Let~\inlinemath{X} be a smooth Fano variety of dimension~\inlinemath{n}, and let~\inlinemath{f\in\CC[y_1^{\pm1},\ldots,y_n^{\pm1}]} be a Laurent polynomial in~\inlinemath{n} variables.
Write~\inlinemath{[f]} for the constant term of~\inlinemath{f}, that is,
the coefficient
at~\inlinemath{y_1^0\cdot\ldots\cdot y_n^0}. The series
\[
 \Phi_f(t)=\sum_{r=0}^{\infty}{[f^r]}t^r
\]
is called the \emph{main period} of~\inlinemath{f}. The polynomial~\inlinemath{f} is a
\emph{weak Landau--Ginzburg model} of~\inlinemath{X} if
\[
 \Phi_f(t)=\widetilde I_X(t),
\]
where~\inlinemath{\widetilde I_X(t)} is the regularized generating series for the genus-zero
one-point Gromov--Witten invariants of~\inlinemath{X}. See~\cite[\S2.1 and \S3]{PrzToric}
for details.

In this paper, we consider smooth well formed Fano and Calabi--Yau weighted complete
intersections. For these varieties, the existence problem for nef-partitions can be
stated in terms of the weights of the coordinates
and the degrees of the defining equations.

\begin{definition}[{cf.\ \cite[\S2]{Prz11}}]
Let~\inlinemath{X\subset\PP(a_0,\ldots,a_N)} be a weighted complete intersection
of multidegree~\inlinemath{(d_1,\ldots,d_k)} in a weighted projective space
with weights~\inlinemath{a_0,\ldots,a_N}.
A \emph{nef-partition} for~\inlinemath{X} is a collection of pairwise disjoint subsets~\inlinemath{I_1,\ldots,I_k\subset\{0,\ldots,N\}} such that, for every~\inlinemath{1\mathrel{\leqslant} j\mathrel{\leqslant} k},
\[
 \sum_{i\in I_j}a_i=d_j.
\]
In the Calabi--Yau case, these subsets partition the entire set of indices of the weights.
In the Fano case, write~\inlinemath{I_0} for the residual block~\inlinemath{\{0,\ldots,N\}\setminus(I_1\cup\ldots\cup I_k)}.
A nef-partition for a Fano weighted complete intersection is called \emph{nice} if~\inlinemath{I_0} contains the index of a weight equal to~\inlinemath{1}.
\end{definition}

Given a nice nef-partition for a Fano weighted complete intersection, the change of
variables in~\cite[\S2]{Prz11} transforms Givental's model birationally
into a family defined by a Laurent polynomial. By~\cite[Theorem 0.1]{Giv}
and~\cite[Theorem 1.1(1), Remark 3.2.1(iii)]{Prz07}
(see also~\cite{Wang}), adding a suitable constant to this
polynomial gives a weak Landau--Ginzburg model of the weighted complete intersection.

\begin{conjecture}[{cf.\ \cite[\S2]{Prz11}}]\label{conj:nef}
Every smooth well formed Fano weighted complete intersection
admits a nice nef-partition.
\end{conjecture}

This conjecture was previously known in several special cases. A nice nef-partition
was constructed in~\cite[Proposition 2.2]{Prz11} for smooth Fano complete intersections
of Cartier divisors in weighted projective spaces. Toric degenerations of these
complete intersections, determined by the Newton polytopes of the corresponding
Laurent polynomials, were constructed in~\mbox{\cite[Theorem 2.2]{ILP}}.

The existence of a nice nef-partition for every smooth well formed Fano weighted
complete intersection of codimension~\inlinemath{2} was proved in~\cite[Theorem 1.3]{PSnef}.
Nef-partitions and the corresponding Laurent polynomials for Fano weighted complete
intersections of dimension at most five are listed in~\cite[\S\,T.5]{WCI}.
Further sufficient conditions on the weights for the existence of a nef-partition
were obtained in~\cite[Theorem 1.10]{Ovc}.

\begin{samepage}
To apply Givental's construction to the entire class under consideration,
one has to establish the existence of nice nef-partitions. We prove
Conjecture~\ref{conj:nef} in the following stronger form.

\begin{theorem}\label{thm:main}
Every smooth well formed Fano or Calabi--Yau weighted complete intersection~\inlinemath{X} admits a nef-partition. If~\inlinemath{X} is Fano,
the nef-partition can be chosen so that all weights in the residual block~$I_0$
are equal to~\inlinemath{1}.
\end{theorem}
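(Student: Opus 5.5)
The plan is to reduce the statement to a purely arithmetic problem about the weights $a_0,\ldots,a_N$ and degrees $d_1,\ldots,d_k$, and then to build the blocks $I_1,\ldots,I_k$ one equation at a time by induction on $k$.

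\textbf{Arithmetic input.} I would first record the standard consequences of smoothness and well-formedness, which I take as known from the literature on weighted complete intersections (Dimca; the authors' earlier work~\cite{PSnef}).

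\begin{enumerate}
\item One may assume $X$ is not an intersection with a linear cone, that is, $d_j\neq a_i$ for all $i,j$. After this normalization, nothing is lost by discarding weight-$1$ coordinates, except for their contribution to the residual block.
\item \textbf{Divisibility condition.} For every integer $q>1$, let $s_q$ be the number of indices $i$ with $q\mid a_i$. Then at least $s_q$ of the degrees $d_j$ are divisible by $q$. A sharper form of this, which I would also use, concerns any subset of weights whose gcd exceeds $1$: the number of degrees divisible by that gcd is at least the size of the subset.
\item \textbf{Fano / Calabi--Yau condition.} We have $\sum a_i-\sum d_j>0$ in the Fano case and $=0$ in the Calabi--Yau case.
\end{enumerate}

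The statement to prove by induction is then the following. Any data $(a;d)$ satisfying these conditions, with $\sum a_i\geq\sum d_j$, admits disjoint blocks with $\sum_{i\in I_j}a_i=d_j$. Moreover, all leftover weights equal~$1$. In the Calabi--Yau case, $\sum a_i=\sum d_j$ forces the blocks to exhaust all indices.

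\textbf{Inductive step.} Choose a degree $d=d_k$ and a block $I_k$ with $\sum_{i\in I_k}a_i=d$. The block must be chosen so that the reduced data $(a_i)_{i\notin I_k};(d_1,\ldots,d_{k-1})$ again satisfy the divisibility condition and the inequality. The inequality is automatic, since we subtract $d$ from both sides.

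For the choice, I would order the degrees by taking $d$ to be a maximal degree. The block $I_k$ is then assembled greedily, prime by prime.
\begin{itemize}
\item For each prime power $p^e$ exactly dividing $d$, the divisibility condition guarantees room to place weights divisible by $p$ into the block dedicated to $d$.
\item I would include in $I_k$ all weights whose gcd-class is "charged" to $d$. These are weights $a_i$ such that removing $d$ would otherwise violate the condition for $q=\gcd$ of some group containing $a_i$.
\item I would then fill the remaining gap $d-\sum(\text{chosen})$ by other weights, using weights equal to $1$ last.
\end{itemize}
The counting needed here is a Hall-marriage type argument. Consider the bipartite incidence "weight $a_i$ can go to degree $d_j$ if every prime dividing $a_i$ divides $d_j$". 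The divisibility condition is essentially Hall's condition for this incidence, which should yield an injective assignment of non-unit weights to degrees with $a_i\mid$ (a multiple structure of) $d_j$. After that one still has to check that the sums assigned to each degree do not exceed it. This check is where $a_i\neq d_j$ and the local structure ($\mathrm{lcm}$ of weights charged to a degree divides it) enter. The deficits $d_j-\sum_{i\in I_j}a_i$ are then nonnegative, and the total deficit is at most the number of weight-$1$ coordinates, by the Fano/CY inequality. Filling the deficits with weights equal to $1$ finishes the construction, and every leftover weight is $1$.

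\textbf{Main obstacle.} I expect the main obstacle to be showing that the sum of the weights matched to a given degree $d_j$ never exceeds $d_j$. The matching may also need to be modified so that this holds, and it is not automatic: several weights sharing different prime factors with $d_j$ could overshoot. My first attempt would be to refine the matching via the gcd-subset form of the divisibility condition. Concretely, I would group the weights matched to $d_j$ by their common factors. Then I would argue that if their sum exceeded $d_j$, some subfamily with a common factor $q$ would have too few degrees divisible by $q$, and so violate smoothness. If this fails in general, I would fall back on an induction on the largest prime dividing some weight. In that version, one strips off all weights divisible by that prime $p$ together with an equal number of degrees divisible by $p$. One then divides by $p$ where possible, and reduces to a smaller instance, in the spirit of the codimension-$2$ argument of~\cite{PSnef}.
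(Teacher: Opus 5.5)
There is a genuine gap. The whole difficulty of the theorem is showing that the weights greater than $1$ can be packed into blocks of capacities $d_1,\ldots,d_k$. You leave exactly this step open (your ``main obstacle''), and both the arithmetic input and the matching mechanism you propose are inadequate for it.

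\textbf{The arithmetic input is too weak.} You record that for every set of weights with $\gcd g>1$, at least as many degrees are \emph{divisible by} $g$. This does not imply the packing. Take weights $(1^{(9)},6,10,15)$ and degrees $(2,3,5,30)$:
\begin{itemize}
\item no weight equals a degree, and $\sum a_i=\sum d_j=40$;
\item your condition holds: use $30$ for each singleton, $\{2,30\}$ for $\{6,10\}$, $\{3,30\}$ for $\{6,15\}$, and $\{5,30\}$ for $\{10,15\}$;
\item yet no nef-partition exists, since $6,10,15$ fit only into the block of capacity $30$ and $6+10+15=31$.
\end{itemize}
So the inductive arithmetic statement you set out to prove is false. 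What smoothness actually gives (Proposition~\ref{prop:degree-combinations}) is that the relevant degrees are \emph{nonnegative integral linear combinations} of the weights in $I$. Combined with $a_i\ne d_j$, each such degree has at least two summands, so it is at least twice the smallest weight in $I$. The paper's argument runs on this size information, and your formulation loses it.

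\textbf{The Hall-type matching fails on genuine smooth examples.} Take the Fano intersection in $\PP(1^{(61+t)},6,10,15)$ of multidegree $(16,21,25,30)$ from Remark~\ref{rem:no-strong}. The only degree divisible by every prime factor of $6$, of $10$, or of $15$ is $30$. So in your incidence all three weights have the single neighbour $30$. Hall's condition fails even though the divisibility condition holds, and any assignment respecting the incidence overflows. Every nef-partition whose residual block consists of weights $1$ must place one of these weights into the block of $16$, $21$ or $25$ (e.g.\ $6$ into the block of $25$). Placement therefore has to be governed by size, not by divisibility.

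Your fallback does not repair this. Stripping the weights $10,15$ divisible by $p=5$, together with the degrees $25,30$, leaves the weight $6$ with no degree divisible by $6$. So the hypotheses are not inherited by the smaller instance.

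\textbf{The missing idea} is the paper's Theorem~\ref{thm:numerical}:
\begin{itemize}
\item Sort weights and degrees non-increasingly, and give each block the longest initial segment of the remaining weights that fits.
\item If a weight is left over, apply the induction hypothesis to $(a_1,\ldots,a_{m+1})$ and $(d_1,\ldots,d_{t-1})$. These inherit the combination condition precisely because the relevant degrees are at least $2a_{m+1}>d_t$. This shows every block contains at least two weights.
\item Then $d_j<\sum_{i\in B_j}a_i+\min_{i\in B_j}a_i$ and Lemma~\ref{lem:modified-product} bound $\prod H(d_j)$ strictly by $\prod H(a_i)$.
\item This contradicts $\prod a_i\mid\prod d_j$ together with the inequality between the numbers of even weights and even degrees, both from Lemma~\ref{lem:divisor-products}.
\end{itemize}
Your outer steps agree with the paper: removing linear cones, and filling the blocks with weights $1$ via Proposition~\ref{prop:fano-cy}.
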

\end{samepage}

Theorem~\ref{thm:main} allows us to establish the existence of weak Landau--Ginzburg
models satisfying the toric condition for smooth well formed Fano weighted complete
intersections. Recall (see~\mbox{\cite[\S6.1]{Prz13}}) that a weak Landau--Ginzburg model~\inlinemath{f} of a variety~\inlinemath{X} satisfies the \emph{toric condition} if~\inlinemath{X} admits a flat degeneration to a toric variety for which the convex hull
of the primitive integral generators of the rays of its fan is the Newton polytope
of~\inlinemath{f}.

\begin{corollary}\label{cor:landau-ginzburg}
Every smooth well formed Fano weighted complete intersection
has a weak Landau--Ginzburg model satisfying the toric condition.
\end{corollary}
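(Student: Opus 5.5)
The corollary will follow from Theorem~\ref{thm:main} together with the known results recalled in the introduction. Let $X\subset\PP(a_0,\ldots,a_N)$ be a smooth well formed Fano weighted complete intersection of multidegree $(d_1,\ldots,d_k)$. By Theorem~\ref{thm:main}, $X$ has a nef-partition $I_1,\ldots,I_k$ whose residual block $I_0$ consists of weights equal to $1$. The first point is that $I_0$ is nonempty. Since $X$ is Fano,
\[
\sum_i a_i-\sum_j d_j=\sum_{i\in I_0}a_i>0.
\]
So the nef-partition is nice.

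Next I apply the change of variables of~\cite[\S2]{Prz11} to Givental's model attached to this nice nef-partition. This produces an explicit Laurent polynomial. Concretely, for each block $I_j$ with $j\ge1$ one takes the corresponding factor $\bigl(\sum_{i\in I_j}x_i\bigr)^{d_j}/\prod_{i\in I_j}x_i^{a_i}$. The variables with indices in $I_0$ supply the torus, normalized using a weight-$1$ coordinate. After adding the appropriate constant, \cite[Theorem 0.1]{Giv} and \cite[Theorem 1.1(1), Remark 3.2.1(iii)]{Prz07} apply, since $X$ is smooth, well formed and hence disjoint from the singular locus of the ambient space. Together they show that the resulting polynomial $f$ satisfies $\Phi_f=\widetilde I_X$. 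So $f$ is a weak Landau--Ginzburg model of $X$.

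It remains to check the toric condition. I would follow the argument of~\cite[Theorem 2.2]{ILP}. That argument constructs, from a nef-partition of exactly this Laurent-polynomial form, a flat degeneration of the complete intersection to the toric variety whose fan has ray generators spanning the Newton polytope of $f$. In~\cite{ILP} it was carried out for complete intersections of Cartier divisors. I expect the construction to use only the combinatorics of the partition, namely that the blocks $I_j$ sum to $d_j$ and that $I_0$ contains weight-$1$ entries. The degeneration is obtained by degenerating each equation to a binomial $\prod_{i\in I_j}x_i^{a_i/?}$-type monomial relation, more precisely to the product structure matching the block. The step I expect to be the main obstacle is verifying that this degeneration goes through for non-Cartier degrees and weights larger than $1$ inside the blocks $I_j$. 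In particular one must confirm that the Newton polytope is the convex hull of the primitive ray generators of the degenerate toric variety, so that no non-primitive points appear. My plan is to check that the ILP proof uses only these two combinatorial properties and never the Cartier hypothesis, or else to cite its extension via the nef-partition framework of~\cite[\S6.1]{Prz13}.
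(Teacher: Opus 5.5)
Your outline matches the paper's proof. Theorem~\ref{thm:main} together with Proposition~\ref{prop:fano-cy} gives a nice nef-partition. The change of variables of~\cite[\S2]{Prz11} then gives the Laurent polynomial $f$, and \cite{Giv}, \cite{Prz07} identify its main period with $\widetilde I_X$ after adjusting the constant term. The paper makes this last step concrete by computing $[f^r]$ directly. With $i_X=\sum_ia_i-\sum_jd_j$, it obtains $\Phi_f(t)=\sum_{s\geqslant0}\frac{(i_Xs)!\prod_j(d_js)!}{\prod_i(a_is)!}\,t^{i_Xs}$. This makes the comparison with Givental's formula explicit and shows independence of the chosen partition. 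Relying on the citation, as the introduction does, is acceptable.

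What your proposal does not finish is the toric condition. You leave it as something to be checked, and both your sketch and your fallback need correcting.

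\begin{itemize}
\item \emph{The degenerate variety.} Take $p_0\in I_0$ with $a_{p_0}=1$, and let $T\subset\PP(a_0,\ldots,a_N)$ be given by $\prod_{i\in I_j}x_i=x_{p_0}^{d_j}$ for $1\leqslant j\leqslant k$. Every $x_i$ on the left occurs to the first power. The relation is homogeneous exactly because $\sum_{i\in I_j}a_i=d_j$, so there is no ``$x_i^{a_i/?}$''.
\item \emph{How the paper closes the step.} It simply invokes \cite[Theorem 2.2]{ILP} (see also \cite[Theorem 9.2.1]{WCI}) for an arbitrary nice nef-partition. This gives that $T$ is toric, that $X$ degenerates flatly to $T$, and that the fan polytope of $T$ is the Newton polytope of $f$. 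That last clause is precisely your worry about primitive generators, so no separate verification of a Cartier-free version is carried out.
\item \emph{Your fallback.} Citing an extension ``via~\cite[\S6.1]{Prz13}'' would not work. That reference is where the toric condition and the conjecture on toric Landau--Ginzburg models are formulated; it is not a degeneration result.
\end{itemize}

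If you want to check flatness by hand, the argument of Corollary~\ref{cor:cy-degeneration} applies verbatim to the family $tf_j+(1-t)\bigl(\prod_{i\in I_j}x_i-x_{p_0}^{d_j}\bigr)$, since $T$ has codimension $k$. The identification of the fan polytope with the Newton polytope is the part you genuinely need the citation for.
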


\begin{remark}
By~\cite[Theorem 1.1]{Prince}, the Landau--Ginzburg models arising from different nice
nef-partitions of the same weighted complete intersection are fibrewise birational.
\end{remark}

Recall (see~\cite[Definition 1.6]{PrzStrong}) that a \emph{log Calabi--Yau compactification}
of a Laurent polynomial~\inlinemath{f} in~\inlinemath{n} variables is a normal variety~\inlinemath{Z} containing the algebraic torus~\inlinemath{(\CC^*)^n} as an open subset, together with a proper
morphism~\inlinemath{u\colon Z\to\PP^1} such that the restriction of~\inlinemath{u} to
this torus is given by~\inlinemath{f}, the variety~\inlinemath{Z\setminus u^{-1}(\infty)}
is smooth, and the anticanonical class of~\inlinemath{Z} is linearly equivalent to a
fibre of~\inlinemath{u}. The variety~\inlinemath{Z\setminus u^{-1}(\infty)} itself is
called a \emph{Calabi--Yau compactification} (one may choose an anticanonical divisor
on~\inlinemath{Z} supported in the fibre over infinity).
A weak Landau--Ginzburg model is \emph{toric} if it satisfies the toric condition
and admits a (log) Calabi--Yau compactification.

\begin{samepage}
\begin{conjecture}[{\cite[\S6.1]{Prz13}}]\label{conj:compactification}
Every smooth Fano variety has a toric Landau--Ginzburg model.
\end{conjecture}
\end{samepage}

Thus, Corollary~\ref{cor:landau-ginzburg} shows that, for weighted complete
intersections, proving Conjecture~\ref{conj:compactification} reduces to constructing
a (log) Calabi--Yau compactification. Log Calabi--Yau compactifications were
constructed in~\cite[Theorem 1]{PrzCY} for Fano complete intersections in ordinary
projective spaces, and in~\cite[Theorem 3.1]{PrzStrong} for smooth cyclic covers of
projective spaces when the cover has Fano index~\inlinemath{1}.

Recall that a nef-partition of a Fano weighted complete intersection is \emph{strong}
if all weights in~\inlinemath{I_0} are equal to~\inlinemath{1} and $a_i$ divides $d_j$
whenever~\inlinemath{i\in I_j} and~\inlinemath{1\mathrel{\leqslant} j\mathrel{\leqslant} k}
(see~\cite[Remark 4.2]{PrzStrong}). Ovcharenko~\cite{Ovc} showed that a smooth
well formed weighted complete intersection need not admit a strong nef-partition;
see Remark~\ref{rem:no-strong}.

The idea of the proof of Theorem~\ref{thm:main} is as follows. We first reduce to
the case where the weighted complete intersection~$X$ is not a numerical
intersection with a linear cone. We then need to pack the weights greater than~$1$
into blocks corresponding to the degrees of the defining equations, so that the
sum of the weights in each block does not exceed the corresponding degree.
To do this, we arrange both the weights and the degrees in non-increasing order and fill
the blocks step by step,
assigning to each block the longest initial segment of the
remaining weights that fits into it.

If some weights remain after all blocks have been filled, we show that every
block contains at least two weights. A block containing at most one weight would
contradict the induction hypothesis applied to suitable initial segments of the
weight and degree sequences. Comparing the products of the weights and degrees
then rules out any unassigned weights.
Finally, we add weights equal to~$1$ to each block until its sum equals the
corresponding degree. In the Fano case, at least one weight equal to~$1$
remains;
in the Calabi--Yau case, all weights are used.

\medskip
The paper is organized as follows. In~\S\ref{sec:preliminaries}, we recall the
definitions and preliminary results on weighted complete intersections that we need.
In~\S\ref{sec:arithmetic}, we prove a combinatorial theorem on packing weights into
blocks of prescribed capacities. In~\S\ref{sec:geometry}, we prove the main theorem
and apply it to construct weak Landau--Ginzburg models and degenerations of
Calabi--Yau weighted complete intersections.

\subsection*{Conventions and notation}

We work over the field~\inlinemath{\CC} of complex numbers.

We write~\inlinemath{\#A} for the number of elements of a finite set or finite
collection~\inlinemath{A}; in a collection, repeated values are counted with
multiplicity. The empty sum and the empty product are~\inlinemath{0}
and~\inlinemath{1}, respectively.

\section{Preliminaries}
\label{sec:preliminaries}

We recall the definitions and results on weighted projective spaces and weighted
complete intersections that will be used below. For further details, see~\cite{WCI}.

Let~\inlinemath{a_0,\ldots,a_N} be positive integers. The weighted projective space~\inlinemath{\PP=\PP(a_0,\ldots,a_N)} is defined by
\[
\PP= \operatorname{Proj}\CC[x_0,\ldots,x_N],\qquad \deg x_i=a_i.
\]
A \emph{weighted complete intersection}~\inlinemath{X\subset\PP} of multidegree~\inlinemath{(d_1,\ldots,d_k)} is a subvariety whose homogeneous ideal is generated
by a regular sequence~\inlinemath{f_1,\ldots,f_k} of degrees~\inlinemath{d_1,\ldots,d_k}. In particular,~\inlinemath{\dim X=N-k}.
A smooth projective variety of positive dimension is \emph{Fano} if its anticanonical class is ample,
and \emph{Calabi--Yau} if its canonical class is trivial.

The weighted projective space~\inlinemath{\PP} is \emph{well formed} if the greatest
common divisor of any~\inlinemath{N} of the~$N+1$ weights $a_0,\ldots,a_N$
is~\inlinemath{1}. A subvariety~\inlinemath{X\subset\PP} is well formed if $\PP$
is well formed and
\[
 \codim_X\bigl(X\cap\Sing\PP\bigr)\mathrel{\geqslant}2.
\]
A weighted complete intersection~\inlinemath{X} is a \emph{numerical intersection
with a linear cone} if~\inlinemath{d_j=a_i} for some~\inlinemath{i,j}.

\begin{proposition}[{\cite[Proposition 5.4.1]{WCI}}]\label{prop:fano-cy}
Let~\inlinemath{X\subset\PP(a_0,\ldots,a_N)} be a smooth well formed weighted
complete intersection of multidegree~\inlinemath{(d_1,\ldots,d_k)} and positive
dimension. Then~\inlinemath{X} is Fano if and only if
\[
\sum_{i=0}^N a_i>\sum_{j=1}^k d_j,
\]
and Calabi--Yau if and only if
\[
\sum_{i=0}^N a_i=\sum_{j=1}^k d_j.
\]
\end{proposition}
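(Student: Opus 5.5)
The plan is to deduce both equivalences from the adjunction formula for weighted complete intersections, together with the ampleness of $\mathcal O_X(1)$ as a $\mathbb Q$-Cartier class.

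First I would recall the adjunction formula. For a quasismooth, well formed weighted complete intersection $X\subset\PP(a_0,\ldots,a_N)$ of multidegree $(d_1,\ldots,d_k)$, the dualizing sheaf is a divisorial sheaf:
\[
\omega_X\cong\mathcal O_X\Bigl(\sum_{j=1}^k d_j-\sum_{i=0}^N a_i\Bigr).
\]
This is Dolgachev's result, also in Iano-Fletcher and in~\cite{WCI}. A smooth $X$ is quasismooth, so the formula applies. Well formedness is exactly the hypothesis that makes this work: because $\codim_X(X\cap\Sing\PP)\geqslant 2$, the sheaves $\mathcal O_X(m)$ are determined by their restrictions to the complement of a codimension-$2$ set, where $\PP$ is smooth and ordinary adjunction holds. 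Since $X$ is smooth, every reflexive rank-one sheaf on it is invertible, so each $\mathcal O_X(m)$ is a line bundle. Thus
\[
K_X\sim\mathcal O_X(-\delta),\qquad \delta=\sum_i a_i-\sum_j d_j.
\]

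Second, I would use ampleness. Let $L=\operatorname{lcm}(a_0,\ldots,a_N)$. Then $\mathcal O_{\PP}(L)$ is an ample line bundle, so its restriction $\mathcal O_X(L)$ is ample. Since $\mathcal O_X(m)^{\otimes L}\cong\mathcal O_X(mL)$ on the smooth variety $X$, the bundle $\mathcal O_X(m)$ is:
\begin{itemize}
\item ample if $m>0$;
\item anti-ample if $m<0$;
\item trivial if $m=0$.
\end{itemize}
Moreover, because $\dim X>0$, an ample or anti-ample line bundle has nonzero degree on any curve in $X$, so it is not numerically trivial.

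Now apply this with $m=\delta$, recalling $-K_X=\mathcal O_X(\delta)$:
\begin{itemize}
\item If $\delta>0$, then $-K_X$ is ample, so $X$ is Fano.
\item If $\delta<0$, then $K_X$ is ample, so $X$ is neither Fano nor Calabi--Yau.
\item If $\delta=0$, then $K_X$ is trivial, so $X$ is Calabi--Yau but not Fano.
\end{itemize}
This gives both equivalences.

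The main obstacle is the first step: identifying $\omega_X$ with $\mathcal O_X(\sum d_j-\sum a_i)$ and knowing that these sheaves behave multiplicatively, i.e.\ $\mathcal O_X(m)^{\otimes L}\cong\mathcal O_X(mL)$. This can fail for non-well-formed embeddings. I would handle it by restricting to the open set $U=X\setminus\Sing\PP$, whose complement has codimension at least $2$. On $U$ everything is a genuine line bundle and standard adjunction applies. The isomorphisms then extend to $X$ by reflexivity, since $X$ is smooth and hence normal.
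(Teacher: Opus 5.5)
Your argument is correct. It is essentially the standard proof of the cited result \cite[Proposition 5.4.1]{WCI}, which the paper quotes without reproving: the adjunction formula $\omega_X\cong\mathcal O_X\bigl(\sum_j d_j-\sum_i a_i\bigr)$ combined with ampleness of $\mathcal O_X(L)$ for $L=\operatorname{lcm}(a_0,\ldots,a_N)$. The one imprecision is the claim that a smooth $X$ is automatically quasismooth, since this needs well formedness (it follows from the fact, quoted in the paper from \cite[Proposition 3.7.1]{WCI}, that a smooth well formed $X$ is disjoint from $\Sing\PP$); however, your restriction to $U=X\setminus\Sing\PP$ followed by reflexive extension makes that step unnecessary.
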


The next proposition follows by the same argument as in the proof
of~\cite[Proposition~3.5.6]{WCI}, using~\cite[Theorem 3.7.3]{WCI};
cf.\ also~\cite[Proposition~2.17]{Ovc}.

\begin{proposition}\label{prop:degree-combinations}
Let $X\subset\PP=\PP(a_0,\ldots,a_N)$ be a smooth well formed weighted complete
intersection of multidegree~\inlinemath{(d_1,\ldots,d_k)}. For each nonempty~\inlinemath{I\subset\{0,\ldots,N\}} with~\inlinemath{\gcd(a_i\mid i\in I)>1},
there are at least~\inlinemath{\#I} distinct indices~\inlinemath{j} such that the
degree~\inlinemath{d_j} is a nonnegative integral linear combination of the
weights~\inlinemath{a_i},~\inlinemath{i\in I}.
\end{proposition}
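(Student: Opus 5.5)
The plan is to find a point of $X$ lying in the singular locus of $\PP$ whenever the conclusion fails, and then contradict smoothness of~$X$. Fix a nonempty $I\subset\{0,\ldots,N\}$ with $g=\gcd(a_i\mid i\in I)>1$. Consider the coordinate stratum
\[
\Pi_I=\{x_m=0 \text{ for all } m\notin I\}\subset\PP ,
\]
which is a weighted projective space $\PP(a_i\mid i\in I)$ of dimension $\#I-1$. Every point of $\Pi_I$ has a nontrivial stabilizer: it contains $\mu_g$ acting on the affine cone. Since $\PP$ is well formed, this puts $\Pi_I$ inside $\Sing\PP$. This is the same setup as in the proof of~\cite[Proposition~3.5.6]{WCI}.

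Let $J$ be the set of indices $j$ such that $d_j$ is a nonnegative integral combination of the $a_i$, $i\in I$. Suppose, for contradiction, that $r=\#J<\#I$.

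\begin{itemize}
\item For $j\notin J$, no monomial of degree $d_j$ involves only the variables $x_i$, $i\in I$. Hence every monomial of $f_j$ contains some $x_m$ with $m\notin I$, and $f_j$ vanishes identically on the affine cone $\CC^{\#I}$ over $\Pi_I$.
\item Consequently, the affine cone over $X\cap\Pi_I$ is cut out in $\CC^{\#I}$ by the $r$ polynomials $f_j|_{\Pi_I}$, $j\in J$.
\item These polynomials are weighted homogeneous, so the cone has dimension at least $\#I-r\geqslant 1$ and contains the origin. In particular it contains a nonzero point.
\end{itemize}

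Therefore $X\cap\Pi_I\neq\varnothing$, and so $X$ meets $\Sing\PP$ at a point of $\Pi_I$.

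To reach a contradiction I would use~\cite[Theorem~3.7.3]{WCI}. The form I need is that a smooth well formed weighted complete intersection is quasi-smooth and does not meet the singular strata of $\PP$ along which the orbifold structure is nontrivial. (Equivalently, a point of $X$ with nontrivial stabilizer on the affine cone of a quasi-smooth $X$ would be a quotient singularity of $X$, unless the stabilizer acts by reflections. Well-formedness of $X$, i.e.\ $\codim_X(X\cap\Sing\PP)\geqslant2$, excludes reflections.)

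If needed, I would also run the Jacobian form of the argument to make this fully explicit:
\begin{itemize}
\item At a point $p$ of the cone over $X\cap\Pi_I$, for $j\notin J$ the partial derivatives $\partial f_j/\partial x_i$ with $i\in I$ vanish. Only monomials that are linear in the $x_m$ with $m\notin I$ contribute, and these are weighted by non-multiples of $g$.
\item The $\mu_g$-action on the tangent space of $X$ at $p$ is then read off from the normal directions $x_m$, $m\notin I$, minus those killed by the rows $f_j$, $j\notin J$.
\item This leaves a nontrivial non-reflection action, hence a singular point of $X$.
\end{itemize}

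The main obstacle is this last step: pinning down exactly which statement of~\cite[Theorem~3.7.3]{WCI} (smooth plus well formed $\Rightarrow$ quasi-smooth, with the local-quotient description) converts ``$X$ meets $\Pi_I$'' into a genuine singularity of $X$. The counting and nonemptiness part is routine. I would first try to cite the local analytic description of quasi-smooth weighted complete intersections as $\CC^{\dim X}/\mu_g$ near $p$, and check that well-formedness rules out pseudo-reflections.
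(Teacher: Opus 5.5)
Your proposal is correct and is the paper's argument in contrapositive form. The paper likewise places $\PP_I$ inside $\Sing\PP$ and notes that $X\cap\PP_I=\varnothing$. It then concludes that at least $\#I$ of the restrictions $f_j|_{\PP_I}$ are nonzero, each containing a monomial in the $x_i$, $i\in I$.

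The step you flag as the main obstacle is exactly what the paper takes from \cite[Proposition~3.7.1]{WCI}: a smooth well formed weighted complete intersection is disjoint from $\Sing\PP$. So your Jacobian fallback is not needed, and as sketched it would already presuppose quasi-smoothness.
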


\begin{proof}
Let~\inlinemath{f_1,\ldots,f_k} be homogeneous polynomials of degrees~\inlinemath{d_1,\ldots,d_k} defining~\inlinemath{X}. Consider the coordinate subspace
\[
 \PP_I=\{(x_0:\ldots:x_N)\mid x_i=0 \text{\ for all\ } i\notin I\}\subset\PP.
\]
According to~\cite[Theorem 1.3.8]{WCI},
the condition~\inlinemath{\gcd(a_i\mid i\in I)>1}
implies that~\inlinemath{\PP_I\subset\Sing\PP}.
By~\mbox{\cite[Proposition 3.7.1]{WCI}}, the smooth well formed weighted complete
intersection~\inlinemath{X} is disjoint from the singular locus of the weighted
projective space~$\PP$. Hence~\mbox{$X\cap\PP_I=\varnothing$}.

Since~\inlinemath{\dim\PP_I=\#I-1}, any~\inlinemath{\#I-1} nonzero homogeneous
polynomials have a common zero on~\inlinemath{\PP_I}. Thus the restrictions
of~\inlinemath{f_j} to $\PP_I$ are not identically zero for at least~\inlinemath{\#I}
distinct indices~\inlinemath{j}. Each such restriction contains a monomial in the
variables~\inlinemath{x_i},~\inlinemath{i\in I}. Its weighted degree is~\inlinemath{d_j}, giving the required expression for~\inlinemath{d_j}.
\end{proof}

\section{Weights and degrees}
\label{sec:arithmetic}

In this section, we prove the combinatorial results needed for the proof
of Theorem~\ref{thm:main}.

Let~\inlinemath{A=(a_1,\ldots,a_s)} and~\inlinemath{D=(d_1,\ldots,d_k)} be finite
collections of integers, where~\inlinemath{a_i\mathrel{\geqslant}2}
and~\inlinemath{d_j\mathrel{\geqslant}2}; either collection may be empty.
As in~\S\ref{sec:preliminaries}, we call the numbers $a_i$ and $d_j$ weights
and degrees, respectively. A \emph{packing} of~\inlinemath{A} into blocks of
capacities~\inlinemath{d_1,\ldots,d_k} is a partition of the index set~\inlinemath{\{1,\ldots,s\}} into pairwise disjoint blocks~\inlinemath{B_1,\ldots,B_k} such that, for every~\inlinemath{1\mathrel{\leqslant} j\mathrel{\leqslant} k},
\[
 \sum_{i\in B_j}a_i\mathrel{\leqslant} d_j.
\]
Blocks are allowed to be empty.

Our aim is to prove the following theorem. Under the condition on weights and
degrees imposed by Proposition~\ref{prop:degree-combinations}, it says that all
weights greater than one can be packed into $k$ blocks so that the sum of the
weights in each block does not exceed the corresponding degree. Moreover,
a simple greedy algorithm produces such a packing.

\begin{theorem}\label{thm:numerical}
Suppose that~\inlinemath{a_i\ne d_j} for all $i$ and $j$. Assume that, for every
nonempty subset~\inlinemath{I\subset\{1,\ldots,s\}} with~\inlinemath{\gcd(a_i\mid i\in I)>1}, there are at least~\inlinemath{\#I} distinct
indices~\inlinemath{j} such that~\inlinemath{d_j} is a nonnegative integral linear
combination of the weights~\inlinemath{a_i},~\inlinemath{i\in I}.
Arrange both the weights and the degrees in non-increasing order. For~\inlinemath{j=1,\ldots,k}, step by step,
fill the $j$-th block with the longest initial
segment of the remaining sequence of weights that has sum at most~\inlinemath{d_j}. Then this algorithm assigns every weight to a block,
and hence gives a packing of~\inlinemath{A} into blocks of
capacities~\mbox{$d_1,\ldots,d_k$}.
\end{theorem}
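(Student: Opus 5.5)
We argue by induction on $k+s$, following the outline in the introduction. Order the weights as $a_1\geqslant\ldots\geqslant a_s$ and the degrees as $d_1\geqslant\ldots\geqslant d_k$, and suppose the greedy algorithm leaves some weight unassigned. The hypothesis of Theorem~\ref{thm:numerical} is inherited by any pair consisting of an initial segment of $A$ and a subcollection of $D$. Indeed, if $I$ is a subset of the first $t$ weights, the degrees that are combinations of $a_i$, $i\in I$, are at least $\min_{i\in I}a_i\geqslant a_t$. They can therefore be found among the degrees exceeding $a_t$, since no degree equals a weight.

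\textbf{Step 1: every block has at least two weights.} Suppose block $B_j$ receives at most one weight. Since the algorithm did not add the next weight $a_m$, we get $d_j<a_{m}+(\text{content of }B_j)$. If $B_j$ is empty, then $d_j<a_m$, so every later degree is smaller than all remaining weights. Apply the hypothesis to $I=\{m\}$ (here $\gcd=a_m>1$). It gives at least one degree that is a positive multiple of $a_m$, hence $\geqslant a_m$, and because $d_j\neq a_m$ such degrees are in fact $\geqslant 2a_m$. All of them lie among $d_1,\ldots,d_{j-1}$.

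More generally, take $I$ to be the set of weights $a_1,\ldots,a_m$ that share a common prime factor with $a_m$. One would rather argue with initial segments, though: applying the induction hypothesis to $(a_1,\ldots,a_{m})$ against $(d_1,\ldots,d_{j-1})$ would pack these weights into $j-1$ blocks. We must compare this with the greedy fill. The greedy fill of the first $j-1$ blocks with nonincreasing weights has a "prefix optimality" property: it places the maximal number of initial weights. This should be proved as a small lemma by an exchange argument, possibly only in the restricted form needed here. With this lemma, the induction hypothesis contradicts the fact that $a_m$ was left over.

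The case where $B_j$ contains exactly one weight $a_p$ is handled similarly. There $d_j<a_p+a_{p+1}\leqslant 2a_p$ and $d_j\neq a_p$, and one uses the hypothesis with $I=\{p\}$ or $I=\{p+1\}$ to force a degree that is a multiple lying strictly between, or to push the contradiction back to the earlier blocks. I expect this step to be the main obstacle: the precise choice of the initial segments and of the set $I$ fed into the hypothesis, and making the greedy-optimality comparison rigorous.

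\textbf{Step 2: product comparison.} Once every block has at least two weights and some weight $a_r$ is unassigned, we have $d_j\geqslant$ (sum of its block) $\geqslant$ (product of its block) / something. The clean inequality is $x+y\leqslant xy$ for $x,y\geqslant2$, so the sum of a block is at most the product of its weights. Hence $\prod_j d_j<\prod_i a_i$ strictly, because the leftover weights contribute extra factors $\geqslant2$.

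On the other hand, the hypothesis supplies the reverse inequality. Applying it to each prime $p$ and $I=\{i: p\mid a_i\}$, and to prime powers in the same way, yields a count showing that the $p$-adic valuation of $\prod d_j$ dominates that of $\prod a_i$. This mirrors the standard divisibility $\prod a_i\mid\prod d_j$ for smooth well formed complete intersections. I would first try to derive that divisibility purely from the combinatorial hypothesis by a Hall-type matching argument, with sets $I_q=\{i: q\mid a_i\}$ for prime powers $q$. If a full divisibility is hard, it suffices to get $\prod a_i\leqslant\prod d_j$, which contradicts the strict inequality above. Hence no weight remains, and the algorithm yields a packing.
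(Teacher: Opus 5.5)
Your proposal has a genuine gap in Step~2, which is where the real difficulty of the theorem lies. The chain $d_j\geqslant\sum_{i\in B_j}a_i\leqslant\prod_{i\in B_j}a_i$ gives no upper bound on $d_j$, so it cannot yield $\prod_j d_j<\prod_i a_i$. The only available upper bound comes from the failure of the greedy step: $d_j<\sum_{i\in B_j}a_i+\min_{i\in B_j}a_i$. With it you do get $d_j\leqslant\prod_{i\in B_j}a_i$ in every case except one, namely $B_j$ consisting of two weights equal to $2$ and $d_j=5$.

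That exception breaks the argument. Take nine weights equal to $2$ and four degrees equal to $5$. The algorithm puts two weights into every block and leaves one over, yet $\prod a_i=2^9=512<625=5^4=\prod d_j$. So ``every block has at least two weights, a weight is left over, and $\prod a_i\leqslant\prod d_j$'' is not contradictory. In particular, your fallback claim that $\prod a_i\leqslant\prod d_j$ suffices is false. What rules the example out is parity: there are nine even weights and no even degree. The missing idea is to build this into the product comparison.

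The paper sets $H(n)=n$ for odd $n$ and $H(n)=9n/8$ for even $n$. Lemma~\ref{lem:modified-product} proves $H(d)\leqslant\prod_i H(c_i)$ whenever $r\geqslant2$, all $c_i\geqslant2$ and $d<\sum_i c_i+\min_i c_i$; note $H(5)=5<81/16=H(2)^2$. The paper then combines $\prod a_i\leqslant\prod d_j$ with $e_A\leqslant e_D$, the numbers of even weights and even degrees. This gives $\prod_i H(a_i)\leqslant\prod_j H(d_j)$, which contradicts the strict inequality coming from the blocks.

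For the arithmetic input, no Hall-type matching is needed. When nonempty, the set $\{i\mid q\text{ divides }a_i\}$ satisfies the gcd condition, which directly gives $\#\{i\mid q\text{ divides }a_i\}\leqslant\#\{j\mid q\text{ divides }d_j\}$. Summing over $q=p^r$ compares $p$-adic valuations, and $q=2$ gives $e_A\leqslant e_D$.

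Step~1 is also incomplete in the one-weight case, but it is the routine part, and the repair via $I=\{p\}$ or $I=\{p+1\}$ is not what is needed. Your inheritance criterion, ``degrees exceeding $a_t$'', is too weak there, because $d_j$ itself exceeds the weight placed in $B_j$, so you cannot discard $d_j$. Instead, strengthen the bound you already noticed in the empty case.

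Let $m$ be the number of weights placed in the first $j-1$ blocks; in both cases $d_j<2a_{m+1}$. Any degree that is a nonnegative combination of weights from a subset of $a_1,\ldots,a_{m+1}$ needs at least two summands, since no degree equals a weight. So it is at least $2a_{m+1}>d_j$ and lies among $d_1,\ldots,d_{j-1}$. Hence $(a_1,\ldots,a_{m+1})$ and $(d_1,\ldots,d_{j-1})$ satisfy the hypotheses.

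No prefix-optimality or exchange lemma is needed either. The induction hypothesis is a statement about the greedy algorithm itself, and truncating to these collections leaves its first $j-1$ steps unchanged. This is because every weight placed there, and every first rejected weight, has index at most $m+1$. So these blocks again receive exactly $m$ weights and $a_{m+1}$ stays unassigned, contradicting the induction hypothesis.
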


We need two lemmas to prove Theorem~\ref{thm:numerical}.
For a positive integer~\inlinemath{n}, set
\[
 H(n)=
 \begin{cases}
 n,&\text{if $n$ is odd},\\[2pt]
 \frac{9n}{8},&\text{if $n$ is even}.
 \end{cases}
\]
\begin{lemma}\label{lem:modified-product}
Let~\inlinemath{r\mathrel{\geqslant}2} and~\inlinemath{c_1,\ldots,c_r\mathrel{\geqslant}2} be integers. Suppose that a positive
integer~\inlinemath{d} satisfies
\[
 d<\sum_{i=1}^r c_i+\min_{1\mathrel{\leqslant} i\mathrel{\leqslant} r}c_i.
\]
Then
\[
 H(d)\mathrel{\leqslant}\prod_{i=1}^r H(c_i).
\]
\end{lemma}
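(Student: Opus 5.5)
The plan is to compare both sides through the crude bounds $n\mathrel{\leqslant}H(n)\mathrel{\leqslant}\frac{9n}{8}$, valid for every positive integer~$n$, and to treat the few small configurations where these bounds are too weak by computing $H$ exactly. Set $m=\min_i c_i$ and $S=\sum_i c_i$. Since $d$ is an integer, the hypothesis gives $d\mathrel{\leqslant}S+m-1$. It then suffices in most cases to prove
\[
 \tfrac98\,(S+m-1)\mathrel{\leqslant}\prod_{i=1}^r c_i,
\]
because the left-hand side bounds $H(d)$ from above and the right-hand side bounds $\prod_i H(c_i)$ from below.

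\emph{Case $r\mathrel{\geqslant}3$.} I will prove the displayed inequality by a monotonicity argument in the integers $c_i\mathrel{\geqslant}2$. Raising one $c_i$ by $1$ raises the product by $\prod_{l\ne i}c_l\mathrel{\geqslant}2^{r-1}\mathrel{\geqslant}4$. The same change raises $S+m-1$ by at most $2$: the sum goes up by $1$, and the minimum goes up by at most $1$. Hence the left-hand side grows by at most $\frac94<4$. It is therefore enough to check the base point $c_1=\ldots=c_r=2$. There the inequality reads $\frac98(2r+1)\mathrel{\leqslant}2^r$, which holds for $r=3$ (it reads $7.875\mathrel{\leqslant}8$) and then for all larger $r$ by induction.

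\emph{Case $r=2$.} Write $c_1=m\mathrel{\leqslant}c_2$, so that $d\mathrel{\leqslant}c_2+2m-1$, and split according to $m$.
\begin{itemize}
\item If $m\mathrel{\geqslant}3$, then $c_2+2m-1<3c_2\mathrel{\leqslant}mc_2$, so $\frac98(c_2+2m-1)<\frac98\cdot 3c_2\mathrel{\leqslant}\ldots$; more precisely one checks $\frac98(c_2+2m-1)\mathrel{\leqslant}mc_2$ directly, which suffices.
\item If $m=2$, I use the exact value $H(2)=\frac94$, so the right-hand side is at least $\frac94c_2$. Then $H(d)\mathrel{\leqslant}\frac98(c_2+3)\mathrel{\leqslant}\frac94c_2$ as soon as $c_2\mathrel{\geqslant}3$.
\item The remaining case is $c_1=c_2=2$, where $d\mathrel{\leqslant}5$. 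The values of $H$ on $\{1,\ldots,5\}$ are at most $5$, while $H(2)^2=\frac{81}{16}>5$.
\end{itemize}

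The main obstacle is exactly this case $r=2$ with $m=2$. There the crude estimate $\frac98 d$ versus $\prod_i c_i$ fails, since $\frac98\cdot 5>4$. This is why the factor $\frac98$ in the definition of $H$ on even numbers is needed, and one must use that the weight $2$ is even rather than the generic lower bound $H(c)\mathrel{\geqslant}c$. Everything else is routine once these cases are separated.
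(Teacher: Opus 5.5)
Your proof is correct. For $r=2$ it is the paper's own three-case analysis ($c_1=c_2=2$; $\min_i c_i=2<\max_i c_i$; $\min_i c_i\geqslant 3$). For $r\geqslant 3$, however, you take a different route.

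The paper first extracts from the case $r=2$ the strict inequality $H(u+v)<H(u)H(v)$ for all integers $u,v\geqslant 2$. It then inducts on $r$: replacing the two largest $c_i$ by their sum leaves both $\sum_i c_i$ and $\min_i c_i$ unchanged, so everything reduces to $r=2$.

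You instead observe that for $r\geqslant 3$ the crude bounds $n\leqslant H(n)\leqslant\frac98 n$ already suffice. You then prove $\frac98(S+m-1)\leqslant\prod_i c_i$ directly by monotonicity: a unit increase of some $c_i$ raises the product by at least $4$ and the left-hand side by at most $\frac94$. This is combined with the check at $(2,\ldots,2)$.

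Your version is non-inductive, and it makes visible that the parity refinement built into $H$ is needed only when $r=2$ and $\min_i c_i=2$. The paper's version is more uniform: it treats all $r$ through the single merging inequality, with no separate estimate for $r\geqslant 3$.

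One small point concerns your bullet $m\geqslant 3$. There the first chain breaks down, since for $m=3$ one has $\frac98\cdot 3c_2>mc_2$. The inequality $9(c_2+2m-1)\leqslant 8mc_2$ that you fall back on is true, but it is an equality at $c_1=c_2=3$. It should therefore be verified explicitly, for instance via $8mc_2-9(c_2+2m-1)=(c_2-3)(8m-9)+6(m-3)\geqslant 0$.
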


\begin{proof}
For~\inlinemath{r=2}, set~\inlinemath{u=\max(c_1,c_2)}
and~\inlinemath{v=\min(c_1,c_2)}. Then
\[
d\mathrel{\leqslant} u+2v-1.
\]
If~\inlinemath{u=v=2}, then~\inlinemath{d\mathrel{\leqslant}5} and
\[
 H(d)\mathrel{\leqslant}5<\frac{81}{16}=H(2)^2=H(u)H(v).
\]
If~\inlinemath{v=2} and~\inlinemath{u\mathrel{\geqslant}3}, then
\begin{equation}\label{eq:HuHv-1}
 H(d)\mathrel{\leqslant}\frac{9d}{8}\mathrel{\leqslant}\frac{9(u+3)}{8}\mathrel{\leqslant}\frac{9u}{4}\mathrel{\leqslant} H(u)H(2)=H(u)H(v).
\end{equation}
Finally, if~\inlinemath{u\mathrel{\geqslant} v\mathrel{\geqslant}3}, we have
\[
 8uv-9(u+2v-1)=(u-3)(8v-9)+6(v-3)\mathrel{\geqslant}0,
\]
so
\begin{equation}\label{eq:HuHv-2}
 H(d)\mathrel{\leqslant}\frac{9}{8}\big(u+2v-1\big)\mathrel{\leqslant} uv\mathrel{\leqslant} H(u)H(v).
\end{equation}
This proves the lemma for~\inlinemath{r=2}. For~\inlinemath{d=u+v}, the resulting
inequality is strict: since
\[
u+v<u+2v-1,
\]
replacing $u+2v-1$ by $u+v$ in the estimates~\eqref{eq:HuHv-1}
and~\eqref{eq:HuHv-2} gives a strict inequality for~$H(d)$.
Thus, for all integers~\inlinemath{u,v\mathrel{\geqslant}2}, we have
\begin{equation}\label{eq:merge-product}
 H(u+v)<H(u)H(v).
\end{equation}

For~\inlinemath{r\mathrel{\geqslant}3}, order the numbers so that~\inlinemath{c_1\mathrel{\geqslant}\ldots\mathrel{\geqslant} c_r}, and replace the
two largest numbers by their sum. This leaves the sum and the minimum of the collection
unchanged. Induction on~\inlinemath{r}, together with~\eqref{eq:merge-product}, gives
\[
 H(d)\mathrel{\leqslant} H(c_1+c_2)\prod_{i=3}^r H(c_i)
 <\prod_{i=1}^r H(c_i).\qedhere
\]
\end{proof}

\begin{lemma}[{cf.\ \cite[Proposition 6.3.2]{WCI}}]\label{lem:divisor-products}
Suppose that for each nonempty~\inlinemath{I\subset\{1,\ldots,s\}} with~\inlinemath{\gcd(a_i\mid i\in I)>1}, there are at least~\inlinemath{\#I} distinct
indices~\inlinemath{j} such that~\inlinemath{d_j} is a nonnegative integral linear
combination of the weights~\inlinemath{a_i},~\inlinemath{i\in I}.
Then, for every integer~\inlinemath{q\mathrel{\geqslant}2}, one has
\begin{equation}\label{eq:divisor-counts}
 \#\{i\mid a_i\text{ is divisible by }q\}
 \mathrel{\leqslant}\#\{j\mid d_j\text{ is divisible by }q\},
\end{equation}
and
\begin{equation}\label{eq:prod-weights-vs-degrees}
 \prod_{i=1}^s a_i\mathrel{\leqslant}\prod_{j=1}^k d_j.
\end{equation}
\end{lemma}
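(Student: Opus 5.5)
The first inequality follows directly from the hypothesis; the product inequality then follows by comparing prime-power multiplicities.

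\emph{Divisibility counts.} Fix $q\geqslant2$ and let $I=\{i\mid q \text{ divides } a_i\}$. If $I$ is empty, \eqref{eq:divisor-counts} is trivial. Otherwise $\gcd(a_i\mid i\in I)$ is divisible by $q$, hence is greater than $1$. The hypothesis then gives at least $\#I$ distinct indices $j$ for which $d_j$ is a nonnegative integral linear combination of the $a_i$ with $i\in I$. Every such combination is divisible by $q$. Hence at least $\#I$ of the degrees are divisible by $q$, which is \eqref{eq:divisor-counts}.

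\emph{Product inequality.} I would compare the exponent of each prime $p$ on the two sides of \eqref{eq:prod-weights-vs-degrees}. For a positive integer $m$, Legendre-type counting gives
\[
 v_p(m)=\sum_{e\geqslant1}[\,p^e\mid m\,],
\]
where $[\cdot]$ denotes the indicator of the condition. Summing over the weights and using \eqref{eq:divisor-counts} with $q=p^e$ for every $e\geqslant1$, we get
\[
 v_p\Bigl(\prod_i a_i\Bigr)=\sum_{e\geqslant1}\#\{i\mid p^e\text{ divides }a_i\}\leqslant\sum_{e\geqslant1}\#\{j\mid p^e\text{ divides }d_j\}=v_p\Bigl(\prod_j d_j\Bigr).
\]
Since this holds for every prime $p$, the product of the weights divides the product of the degrees. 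Both products are positive integers, so \eqref{eq:prod-weights-vs-degrees} follows.

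There is no real obstacle here. The only step needing care is recognizing that \eqref{eq:divisor-counts} must be applied to prime powers $q=p^e$, not just to primes, so that the full $p$-adic valuations can be compared. Note also that the empty collections, and the convention that the empty product equals $1$, are handled automatically: if there are no weights, the left-hand side of \eqref{eq:prod-weights-vs-degrees} is $1$ and the inequality holds since all $d_j\geqslant2$.
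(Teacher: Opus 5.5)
Your proof is correct and matches the paper's argument essentially verbatim. Both apply the hypothesis to the (possibly empty) set of weights divisible by $q$ to obtain the divisibility count, then sum that inequality over all prime powers $q=p^e$ to compare $p$-adic valuations and conclude that the product of the weights divides the product of the degrees.
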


\begin{proof}
The set of indices of weights divisible by~\inlinemath{q} is either empty or
satisfies the common-divisor condition in the hypothesis. Every integral linear
combination of these weights is also divisible by~\inlinemath{q}.
Thus~\eqref{eq:divisor-counts} holds in either case.

For a prime~\inlinemath{p}, let~\inlinemath{\nu_p(n)} denote the exponent
of~\inlinemath{p} in the prime factorization of a positive integer~\inlinemath{n}.
This is the number of positive integers~\inlinemath{r} for which~\inlinemath{n}
is divisible by~\inlinemath{p^r}. Summing~\eqref{eq:divisor-counts}
with~\inlinemath{q=p^r} over all positive integers~\inlinemath{r}, we obtain
\[
 \begin{aligned}
 \sum_{i=1}^s\nu_p(a_i)
 &=\sum_{r=1}^{\infty}\#\{i\mid a_i\text{ is divisible by }p^r\}\\
 &\mathrel{\leqslant}\sum_{r=1}^{\infty}\#\{j\mid d_j\text{ is divisible by }p^r\}
 =\sum_{j=1}^k\nu_p(d_j).
 \end{aligned}
\]
Thus the exponent of each prime~\inlinemath{p} in the product of the weights
does not exceed its exponent in the product of the degrees. Hence the product
of the weights divides the product of the degrees, which implies
inequality~\eqref{eq:prod-weights-vs-degrees}.
\end{proof}

We can now prove Theorem~\ref{thm:numerical}.

\begin{proof}[Proof of Theorem~\ref{thm:numerical}]
We argue by induction on the number~\inlinemath{k} of degrees. If~\inlinemath{k=0},
then $D$, and hence~\inlinemath{A}, is empty, so there is nothing to prove.

Suppose that~\inlinemath{k\mathrel{\geqslant}1} and that the assertion holds for
fewer than~\inlinemath{k} degrees. If~\inlinemath{A} is empty, there is again
nothing to prove. Relabel the weights and degrees so that
\[
 a_1\mathrel{\geqslant}\ldots\mathrel{\geqslant} a_s\mathrel{\geqslant}2,\qquad d_1\mathrel{\geqslant}\ldots\mathrel{\geqslant} d_k,
\]
and pack the weights using the algorithm in the statement of the theorem.
Suppose that a weight remains unassigned after all~\inlinemath{k} blocks
have been filled.

We first show that every block must then contain at least two weights.
Suppose that the $t$-th block contains at most one weight, and that the first~\inlinemath{t-1} blocks contain~\inlinemath{m} weights in total.
If the $t$-th block is empty, then~\inlinemath{d_t<a_{m+1}}. If it contains one
weight, then the next weight~\inlinemath{a_{m+2}} exists and does not fit into
this block, so
\[
d_t<a_{m+1}+a_{m+2}\mathrel{\leqslant}2a_{m+1}.
\]
In either case, we have~\inlinemath{d_t<2a_{m+1}}.

Consider the collections
\[
 A'=(a_1,\ldots,a_{m+1}),\qquad D'=(d_1,\ldots,d_{t-1}).
\]
We check that they satisfy the hypotheses of the theorem. Let~\inlinemath{I\subset\{1,\ldots,m+1\}} be a nonempty subset with~\inlinemath{\gcd(a_i\mid i\in I)>1}. By assumption, there are at least~\inlinemath{\#I} distinct indices~\inlinemath{j} for which~\inlinemath{d_j}
is a nonnegative integral linear combination of the weights~\inlinemath{a_i},~\inlinemath{i\in I}. Each such expression has at least two
summands, counted with multiplicity, since~\inlinemath{d_j\ne a_i} for every~\inlinemath{i}. All the weights involved
are at least~\inlinemath{a_{m+1}}, so these degrees $d_j$ are at least~\inlinemath{2a_{m+1}>d_t} and therefore belong to~\inlinemath{D'}.
The inequalities~\inlinemath{a_i\ne d_j} also hold for $A'$ and $D'$.
Thus~\inlinemath{A'} and~\inlinemath{D'} satisfy all the hypotheses of the theorem.

By the induction hypothesis, the algorithm must assign all~\inlinemath{m+1}
weights of~\inlinemath{A'} to the first~\inlinemath{t-1} blocks. However,
restricting to these collections leaves the first~\inlinemath{t-1} steps unchanged: all weights
assigned at these steps, as well as the first weight that fails to fit at each
step, are retained in~\inlinemath{A'}. These blocks therefore still contain
exactly~\inlinemath{m} weights. This contradiction proves that every block
contains at least two weights.

Let~\inlinemath{M} be the number of weights assigned to blocks after
all~\inlinemath{k} steps of the algorithm. By assumption,
we have~\inlinemath{M<s}.
Write~\inlinemath{B_j} for the set of indices of the weights assigned to the
$j$-th block, and~\inlinemath{w_j} for the first weight still unassigned after
that block has been filled. The packing rule and the ordering of the weights give
\[
 d_j<\sum_{i\in B_j}a_i+w_j
 \mathrel{\leqslant}\sum_{i\in B_j}a_i+\min_{i\in B_j}a_i.
\]
Since~\inlinemath{\#B_j\mathrel{\geqslant}2}, Lemma~\ref{lem:modified-product} gives
\[
 H(d_j)\mathrel{\leqslant}\prod_{i\in B_j}H(a_i).
\]
Multiplying these inequalities over~\inlinemath{j=1,\ldots,k}, and using the
facts that the blocks~\inlinemath{B_j} partition~\inlinemath{\{1,\ldots,M\}}
and that~\inlinemath{H(a_i)>1} for every weight, we obtain
\begin{equation}\label{eq:failed-product}
 \prod_{j=1}^k H(d_j)
 \mathrel{\leqslant}\prod_{i=1}^M H(a_i)
 <\prod_{i=1}^s H(a_i).
\end{equation}

On the other hand, Lemma~\ref{lem:divisor-products} gives
\[
\prod_{i=1}^s a_i\mathrel{\leqslant}\prod_{j=1}^k d_j.
\]
Let~\inlinemath{e_A} and~\inlinemath{e_D} be the numbers of even entries
in~\inlinemath{A} and~\inlinemath{D}, respectively.
Inequality~\eqref{eq:divisor-counts} with~\inlinemath{q=2} gives~\inlinemath{e_A\mathrel{\leqslant} e_D}. Hence
\[
 \prod_{i=1}^s H(a_i)
 =\left(\frac{9}{8}\right)^{e_A}\prod_{i=1}^s a_i
 \mathrel{\leqslant}\left(\frac{9}{8}\right)^{e_D}\prod_{j=1}^k d_j
 =\prod_{j=1}^k H(d_j).
\]
This contradicts~\eqref{eq:failed-product}. Thus every weight is assigned to
a block, completing the proof.
\end{proof}

%

\section{Proof of Theorem~\ref{thm:main} and Corollary~\ref{cor:landau-ginzburg}}\label{sec:geometry}

We now prove the main results of the paper.

\begin{proof}[Proof of Theorem~\ref{thm:main}]
A nef-partition depends only on the weights and degrees.
According to~\mbox{\cite[Corollaries 3.7.4, 3.9.8]{WCI}},
we may therefore replace~\inlinemath{X}
by a general weighted complete intersection of the same multidegree and then
successively remove pairs consisting of a weight and an equal degree. The resulting
weighted complete intersection is smooth and well formed and has the same dimension.
The difference between the sum of the weights and the sum of the degrees is unchanged.
If a weight~\inlinemath{a_i} and a degree~\inlinemath{d_j=a_i} are removed at one step,
we extend a nef-partition of the remaining weights by adding the singleton block~\inlinemath{\{i\}} corresponding to~\inlinemath{d_j}. The sums of the weights in all
other blocks are unchanged, as is the residual block~\inlinemath{I_0} in the Fano
case. Restoring the removed pairs in reverse order gives a nef-partition for the
original weighted complete intersection. It is therefore enough to consider a
weighted complete intersection that is not a numerical intersection with a linear cone.

If~\inlinemath{k=0}, all weights are equal to~\inlinemath{1}; see, for example,
\cite[Theorem 1.3.8]{WCI}. The assertion is then immediate.
Suppose that~\inlinemath{k\mathrel{\geqslant}1}. Since $X$ is not a numerical intersection with a linear cone, one has $d_j\geqslant 2$
for all $1\leqslant j\leqslant k$.
By Proposition~\ref{prop:degree-combinations} and Theorem~\ref{thm:numerical}, we can
pack all weights greater than $1$ into blocks of capacities~\inlinemath{d_1,\ldots,d_k}. Proposition~\ref{prop:fano-cy} gives
\[
\sum_{i=0}^{N}a_i\mathrel{\geqslant}\sum_{j=1}^{k}d_j,
\]
with equality if and only if $X$ is Calabi--Yau. After the weights greater than~$1$
have been assigned to blocks, there are therefore enough weights equal to~$1$
to bring the sum in each block up to~\inlinemath{d_j}.
If $X$ is Calabi--Yau, this uses all weights. If $X$ is Fano, some weights equal
to~$1$ remain, and we use them to form the block~$I_0$. In either case, let~\inlinemath{I_j} be the set of indices of the weights in the $j$-th block.
This gives the required nef-partition.
\end{proof}

\begin{remark}[{\cite[Example 1.7]{Ovc}}]\label{rem:no-strong}
A strong nef-partition need not exist under the hypotheses of Theorem~\ref{thm:main}.
Indeed, for an integer~\inlinemath{t\mathrel{\geqslant}1}, consider a general weighted
complete intersection
\[
 X\subset\PP(1^{(61+t)},6,10,15)
\]
of multidegree $(16,21,25,30)$, where~\inlinemath{1^{(61+t)}} denotes~\inlinemath{61+t} weights equal to one. This is a smooth well formed Fano variety.
Each of the weights~\inlinemath{6,10,15} divides only the degree~\inlinemath{30},
but~\inlinemath{6+10+15=31>30}, so no strong nef-partition exists.
\end{remark}

Next, we use Theorem~\ref{thm:main} to prove the corollary.

\begin{proof}[Proof of Corollary~\ref{cor:landau-ginzburg}]
Let~\inlinemath{X\subset\PP=\PP(a_0,\ldots,a_N)} be a weighted complete intersection
of multidegree~\inlinemath{(d_1,\ldots,d_k)}.
By Theorem~\ref{thm:main}, it admits a nice nef-partition.
Choose any nice nef-partition and set
\[
 i_X=\sum_{i=0}^N a_i-\sum_{j=1}^k d_j>0.
\]
We use the change of variables from~\cite[\S2]{Prz11}. Recall that Givental's model
is the subvariety of the torus with coordinates~\inlinemath{u_0,\ldots,u_N} defined by
\begin{equation}\label{eq:LG-equations}
 \prod_{i=0}^N u_i^{a_i}=1,\qquad
 \sum_{i\in I_j}u_i=1,
\end{equation}
where $1\mathrel{\leqslant} j\mathrel{\leqslant} k$, equipped with the function~\inlinemath{\sum_{i\in I_0}u_i}.
Choose an index~\inlinemath{p_0\in I_0} with~\inlinemath{a_{p_0}=1}, and choose an
arbitrary index~\inlinemath{p_j} in each block~\inlinemath{I_j} for~\inlinemath{1\leqslant j\leqslant k}.
For~\inlinemath{j=0,\ldots,k}, set~\inlinemath{J_j=I_j\setminus\{p_j\}}.
Introduce variables~\inlinemath{y_i} for~\inlinemath{i\in J_0\cup\ldots\cup J_k}.
There are~\inlinemath{N-k=\dim X} such variables. For~\inlinemath{1\mathrel{\leqslant} j\mathrel{\leqslant} k} and~\inlinemath{i\in J_j}, set
\[
 u_{p_j}=\frac{1}{1+\sum_{i\in J_j}y_i},\qquad
 u_i=\frac{y_i}{1+\sum_{s\in J_j}y_s},
\]
and for~\inlinemath{i\in J_0}, set~\inlinemath{u_i=y_i}.
The linear relations in~\eqref{eq:LG-equations} hold identically.
Since~\inlinemath{a_{p_0}=1}, the first relation in~\eqref{eq:LG-equations}
uniquely determines~\inlinemath{u_{p_0}} in terms of the variables~\inlinemath{y_i}.
The inverse change of variables is~\inlinemath{y_i=u_i/u_{p_j}} for~\inlinemath{i\in J_j},~\inlinemath{j\mathrel{\geqslant}1}, and~\inlinemath{y_i=u_i} for~\inlinemath{i\in J_0}.
Substituting into the function~\inlinemath{\sum_{i\in I_0}u_i}, we see that
Givental's model is birational to the family defined by the Laurent polynomial
\begin{equation}\label{eq:laurent-model}
 f=\frac{\displaystyle\prod_{j=1}^k\left(1+\sum_{i\in J_j}y_i\right)^{d_j}}
 {\displaystyle\prod_{j=0}^k\prod_{i\in J_j}y_i^{a_i}}
 +\sum_{i\in J_0}y_i.
\end{equation}

We compute the constant term~\inlinemath{[f^r]} of~\inlinemath{f^r}.
If the first summand in~\eqref{eq:laurent-model} is chosen~\inlinemath{s} times
in the expansion of~\inlinemath{f^r}, then each summand~\inlinemath{y_i},~\inlinemath{i\in J_0}, must be chosen exactly~\inlinemath{a_i s} times to obtain
a constant term. Hence
\[
r=s+\sum_{i\in J_0}a_i s=i_X s.
\]
There are~\inlinemath{(i_X s)!/\bigl(s!\prod_{i\in J_0}(a_i s)!\bigr)} ways to
choose these summands.

In each factor~\inlinemath{\left(1+\sum_{i\in J_j}y_i\right)^{d_j s}}, we must select the monomial~\inlinemath{\prod_{i\in J_j}y_i^{a_i s}}; the summand~\inlinemath{1} is then
chosen~\inlinemath{a_{p_j} s} times. The coefficient of this monomial is~\inlinemath{(d_j s)!/\prod_{i\in I_j}(a_i s)!}.
Multiplying these coefficients and using~\inlinemath{a_{p_0}=1}, we obtain, for
every integer~\inlinemath{s\mathrel{\geqslant}0}, the equality
\[
 [f^{i_X s}]=\frac{(i_X s)!\prod_{j=1}^k(d_j s)!}
 {\prod_{i=0}^N(a_i s)!},
\]
while~\inlinemath{[f^r]=0} for every integer~\inlinemath{r\mathrel{\geqslant}0}
not divisible by~\inlinemath{i_X}.

Thus the main period of~\inlinemath{f} is
\[
 \Phi_f(t)=\sum_{r=0}^{\infty}{[f^r]}t^r
 =\sum_{s=0}^{\infty}
 \frac{(i_X s)!\prod_{j=1}^k(d_j s)!}{\prod_{i=0}^N(a_i s)!}\,t^{i_X s}.
\]
This series is independent of the choice of a nice nef-partition.

By Givental's formula~\cite[Theorem 0.1]{Giv} and its
generalization~\cite[Theorem 1.1]{Prz07}
(see also~\cite[Theorem~1.1 and equation~(3.9)]{Wang}), a suitable choice of the constant term
of~\inlinemath{f} gives
\[
 \Phi_f(t)=\widetilde I_X(t).
\]
Therefore,~\inlinemath{f} is a weak Landau--Ginzburg model of~\inlinemath{X}.

Let~\inlinemath{x_0,\ldots,x_N} be homogeneous coordinates
on~\inlinemath{\PP(a_0,\ldots,a_N)}, and let~\inlinemath{T} be the subvariety
defined, for~\inlinemath{j=1,\ldots,k}, by
\[
 \prod_{i\in I_j}x_i=x_{p_0}^{d_j}.
\]
It was proved in~\cite[Theorem 2.2]{ILP} (see also~\cite[Theorem 9.2.1]{WCI})
that~\inlinemath{T} is toric, that~\inlinemath{X} admits a flat degeneration
to~\inlinemath{T}, and that the fan polytope of~\inlinemath{T} is the Newton
polytope of~\inlinemath{f}. Thus~\inlinemath{f} satisfies the toric condition.
\end{proof}

\begin{corollary}\label{cor:cy-degeneration}
Let~\inlinemath{X\subset\PP=\PP(a_0,\ldots,a_N)} be a smooth well formed
Calabi--Yau weighted complete intersection of multidegree~\inlinemath{(d_1,\ldots,d_k)}, and let~\inlinemath{I_1,\ldots,I_k} be a nef-partition for~\inlinemath{X}.
Then~\inlinemath{X} admits a flat degeneration to the reduced union of
coordinate subspaces defined in homogeneous coordinates~\inlinemath{x_0,\ldots,x_N} by
\[
 \prod_{i\in I_1}x_i=\ldots=\prod_{i\in I_k}x_i=0.
\]
Its irreducible components are obtained by choosing one index~\inlinemath{i_j\in I_j} for each~\inlinemath{j} and are defined in~$\PP$ by
\[
x_{i_1}=\ldots=x_{i_k}=0.
\]
\end{corollary}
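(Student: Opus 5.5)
The plan is to write down an explicit one-parameter family and check that it is a flat degeneration. Let $f_1,\ldots,f_k$ be homogeneous polynomials of degrees $d_1,\ldots,d_k$ cutting out $X$. For each $j$ put $g_j=\prod_{i\in I_j}x_i$. This is a monomial of degree $\sum_{i\in I_j}a_i=d_j$, because $I_1,\ldots,I_k$ is a nef-partition. Over $\mathbb A^1_\lambda$, consider the family
\[
\mathcal X=\{\lambda f_j+(1-\lambda)g_j=0,\ 1\leqslant j\leqslant k\}\subset\PP\times\mathbb A^1 .
\]
Its fibre over $\lambda=1$ is $X$, and its fibre over $\lambda=0$ is the scheme $X_0=\{g_1=\ldots=g_k=0\}$.

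The first step is the combinatorial description of $X_0$. Since the $I_j$ are pairwise disjoint, the $g_j$ are monomials in disjoint sets of variables. Hence $g_1,\ldots,g_k$ form a regular sequence in $\CC[x_0,\ldots,x_N]$. The ideal $(g_1,\ldots,g_k)$ is a squarefree monomial ideal, so it is radical. Its minimal primes are $(x_{i_1},\ldots,x_{i_k})$ with $i_j\in I_j$: a prime containing every $g_j$ must contain some variable from each $I_j$, and these choices give distinct primes of codimension $k$. So $X_0$ is the reduced union of the coordinate subspaces $x_{i_1}=\ldots=x_{i_k}=0$, and each of these has the expected dimension $N-k$.

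The second step is flatness. Let $R=\CC[\lambda][x_0,\ldots,x_N]$ and let $h_j=\lambda f_j+(1-\lambda)g_j$. Over $\lambda=0$, the images of the $h_j$ in $R/\lambda R$ are the $g_j$, which form a regular sequence. By the standard lifting argument (graded local criterion, using that all the modules are graded with positive-degree generators), the sequence $\lambda,h_1,\ldots,h_k$ is regular in the graded ring $R$. Therefore $\lambda$ is a nonzerodivisor on $R/(h_1,\ldots,h_k)$, so this module is torsion free over $\CC[\lambda]$ near $0$. Torsion freeness over a PID gives flatness, so $\mathcal X$ is flat over a neighbourhood of $0$. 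Alternatively, one can argue that the Hilbert function of every fibre where the $h_j$ form a regular sequence depends only on the $a_i$ and $d_j$, and that this includes the fibres at $\lambda=0$ and $\lambda=1$. Then one restricts to the open subset of $\mathbb A^1$ where the sequence is regular; this subset contains both $0$ and $1$, since the regular-sequence condition is open and $X$ is a complete intersection.

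The main point to handle carefully is passing from these affine-cone statements to the weighted projective setting. Flatness of the graded family $\operatorname{Proj}$ over the base follows from flatness of the graded algebra, so that step is fine. One should also note that the degeneration happens inside $\PP$, which may be singular. The statement only concerns the scheme $X_0$ in $\PP$, so no smoothness of the central fibre is needed. The components are exactly the closed subsets defined by $x_{i_1}=\ldots=x_{i_k}=0$, as claimed. This matches the Fano case in Corollary~\ref{cor:landau-ginzburg}, with the monomial $x_{p_0}^{d_j}$ replaced by $0$ because the residual block is empty.
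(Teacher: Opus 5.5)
Your proposal is correct and follows essentially the paper's proof: you use the same family $\lambda f_j+(1-\lambda)g_j$, and your alternative flatness argument is exactly the paper's, namely constancy of the Hilbert function (which depends only on the weights and degrees) over an open subset of $\mathbb A^1$ containing $0$ and $1$ on which the sequence is regular. Your first, nonzerodivisor argument by itself gives flatness only near $\lambda=0$ and would have to be repeated at $\lambda=1$ to get a connected base containing both fibres; your explicit check that $(g_1,\ldots,g_k)$ is a radical monomial ideal with minimal primes $(x_{i_1},\ldots,x_{i_k})$ fills in a detail the paper leaves implicit.
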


\begin{proof}
Let~\inlinemath{f_1,\ldots,f_k} be homogeneous polynomials of degrees~\inlinemath{d_1,\ldots,d_k} defining~\inlinemath{X}, and set
$$
m_j=\prod_{i\in I_j}x_i.
$$
The monomials~\inlinemath{m_j} have degrees~\inlinemath{d_j} and involve pairwise
disjoint sets of variables, so they form a regular sequence.
For a parameter~\inlinemath{t\in\mathbb A^1}, consider the family defined, for~\inlinemath{j=1,\ldots,k}, by
\[
 tf_j+(1-t)m_j=0.
\]
The fibre at~\inlinemath{t=1} is~\inlinemath{X}, while the fibre
at~\inlinemath{t=0} is defined by
\[
m_1=\ldots=m_k=0.
\]
By~\cite[Corollary 2.5.2]{WCI}, there is an open subset~\inlinemath{U\subset\mathbb A^1} containing~\inlinemath{0} and~\inlinemath{1}
over which the polynomials defining the family form a regular sequence.
The dimensions of all graded pieces of the coordinate rings of the fibres
over~\inlinemath{U} are constant, since the Poincar\'e series of each coordinate ring
depends only on the weights and degrees (see~\cite[Corollary A.2.16]{WCI}).
Since~\inlinemath{U} is reduced, the corresponding coherent~\inlinemath{\mathcal O_U}-modules are locally free
(see, for example,~\mbox{\cite[Exercise II.5.8(c)]{Hartshorne}}). Hence the family is flat.
\end{proof}

\begin{samepage}
\medskip
\noindent\textbf{Use of AI.}
The authors used ChatGPT-6 Astra to prove Theorem~\ref{thm:numerical}.
In particular, the AI system proposed the statement and proof of
Lemma~\ref{lem:modified-product} in place of a longer case by case analysis.
AI was also used in preparing the manuscript. The authors independently
carried out the final mathematical verification of the proofs and wrote the final version of the article.
\par
\end{samepage}

\begingroup
\small
\begin{flushleft}
\begin{samepage}
Victor Przyjalkowski\par
Steklov Mathematical Institute, Russian Academy of Sciences,
8 Gubkina Street, Moscow 119333, Russia\par
\texttt{victorprz@mi-ras.ru, victorprz@gmail.com}\par
\end{samepage}
\medskip
\begin{samepage}
Constantin Shramov\par
Steklov Mathematical Institute, Russian Academy of Sciences,
8 Gubkina Street, Moscow 119333, Russia\par
National Research University Higher School of Economics,
6 Usacheva Street, Moscow 119048, Russia\par
\texttt{costya.shramov@gmail.com}\par
\end{samepage}
\end{flushleft}
\endgroup
\end{document}